%% file: n6_bounds.tex
\documentclass[11pt]{article}
\usepackage[utf8]{inputenc}
\usepackage[T1]{fontenc}
\usepackage{amsmath,amssymb,amsthm}
\usepackage[margin=1.1in]{geometry}
\usepackage{booktabs}
\usepackage{url}
\usepackage{tikz}
\usetikzlibrary{shapes.geometric}

\newtheorem{theorem}{Theorem}
\newtheorem{proposition}[theorem]{Proposition}
\newtheorem{remark}[theorem]{Remark}

\title{Improved bounds for the smallest\\ 4-chromatic graph of girth six}
\author{Glauco Rampone\\[2pt]
  \small\texttt{grampone@ethz.ch}}
\date{August 2026}

\begin{document}
\maketitle

\begin{abstract}
For integers $k,g\ge 3$ let $n_g(k)$ denote the minimum order of a graph with
chromatic number $k$ and girth at least $g$. Exoo and Goedgebeur (DMTCS 2019)
proved $26\le n_6(4)\le 66$, the upper bound being given by an explicit
$66$-vertex graph that has remained the smallest known $4$-chromatic graph of
girth~$6$. We improve both bounds to
\[
  29 \;\le\; n_6(4) \;\le\; 64.
\]
The upper bound is witnessed by an explicit $4$-chromatic graph of girth~$6$
on $64$ vertices with $152$ edges; it is vertex- and edge-critical, and its
automorphism group is cyclic of order~$8$ and acts semiregularly. The lower
bound is an exhaustive isomorph-free computation in the SAT modulo symmetries
framework with co-certificate learning, driven by the edge-density bound of
Liu and Postle for $4$-critical graphs of girth five; it re-derives the
bound $n_6(4)\ge 26$ of Exoo and Goedgebeur by a disjoint method and is
validated in both directions on the known values $n_4(4)=11$ and
$n_5(4)=21$.
We complement the bounds with structural obstructions: no smaller witness can
be obtained from either known witness by local modifications; no
$4$-chromatic Cayley graph of girth~$6$ exists on $54$--$63$ vertices (for
orders $59$ and $61$ no vertex-transitive witness exists at all); and no
witness on at most $63$ vertices admits a semiregular automorphism group
with two or three vertex orbits, for any finite group. Every known witness
of an $n_g(4)$ record with $g\ge 6$ is a lift of a small base graph along a
semiregular group action; the results above close the most symmetric part
of that regime below $64$ vertices.
All claimed properties of the new graph are verified by independent programs
and, in addition, formally certified in the Lean~4 proof assistant: the
non-$3$-colourability is established inside Lean by a formally verified
checker that re-validates a $219{,}532$-node refutation certificate produced
by an external search, with a machine-checked soundness theorem.
\end{abstract}

\noindent\textbf{Keywords:} chromatic number, girth, critical graphs,
Cayley graphs, voltage graphs, SAT modulo symmetries, formal verification,
Lean.\\
\textbf{MSC 2020:} 05C15, 05C38, 05C25, 05C85, 68V20.

\section{Introduction}

A classical theorem of Erd\H{o}s \cite{Erdos1959} states that there exist
graphs of arbitrarily large girth and chromatic number; the proof is
probabilistic, and explicit small witnesses are hard to find. Following
\cite{ExooGoedgebeur2019}, for integers $k,g\ge 3$ let $n_g(k)$ denote the
minimum number of vertices of a graph with chromatic number $k$ and girth at
least $g$.

For $k=4$ the first two values are classical: $n_4(4)=11$, attained by the
Gr\"otzsch graph \cite{Chvatal1974}, and $n_5(4)=21$, attained by exactly
$18$ graphs including the Brinkmann graph \cite{BrinkmannMeringer1997}
(the value and the count are due to Royle; see
\cite{ExooGoedgebeur2019}). For girth $6$, Exoo and Goedgebeur
\cite{ExooGoedgebeur2019} proved
\[
  26 \;\le\; n_6(4) \;\le\; 66,
\]
where the lower bound rests on an exhaustive computation and the upper bound
on an explicit $5$-regular graph of order $66$ with a semiregular automorphism
with $6$ orbits of length $11$. To the best of our knowledge no improvement of
either bound has been published since. (The regular variant of the
problem, the minimum order of an $r$-regular graph of girth $g$ and
chromatic number $\chi$, has recently been studied in
\cite{AraujoPardoEtAl2025}, where upper bounds on it are derived from
explicit graphs of girth $g$ and chromatic number $\chi$, so that smaller
such graphs translate into smaller bounds there.)

We improve both bounds.

\begin{theorem}\label{thm:main}
There exists a graph on $64$ vertices with girth exactly $6$ and chromatic
number $4$. Consequently $n_6(4)\le 64$.
\end{theorem}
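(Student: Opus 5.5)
The proof is by explicit construction followed by verification; there is no conceptual argument to give, so the work lies in presenting the graph in a checkable form and certifying three properties. Following the pattern of the Exoo--Goedgebeur witness, I will describe the graph $G$ as a lift (voltage graph) of a small base graph along a free action of $\mathbb{Z}_8$. Concretely, take a base multigraph $B$ on $8$ vertices with $19$ edges (semi-edges and loops allowed), each dart labelled by a voltage in $\mathbb{Z}_8$. The vertex set of $G$ is $V(B)\times\mathbb{Z}_8$, and a base edge $u\to v$ with voltage $a$ yields the edges $(u,i)(v,i+a)$ for $i \in \mathbb{Z}_8$. This gives $64$ vertices and $152$ edges, and the cyclic group of order $8$ acts semiregularly by construction. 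The voltage table will be printed in full so the reader can rebuild $G$.

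\textbf{Girth exactly six.} Closed walks in $G$ project to closed walks in $B$ whose net voltage is $0$. So girth at least $6$ amounts to a finite check: every non-backtracking closed walk of length $\le 5$ in $B$ must have nonzero net voltage in $\mathbb{Z}_8$. This can be enumerated by hand or by a short program. Alternatively, one runs a BFS from one vertex in each of the $8$ orbits, which by symmetry suffices. To show the girth is exactly $6$, exhibit one explicit $6$-cycle.

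\textbf{Chromatic number at most four.} Give an explicit proper $4$-colouring in a table. It can be checked edge by edge; a colouring that is itself $\mathbb{Z}_8$-compatible, or nearly so, keeps the table short.

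\textbf{Chromatic number at least four.} This is the main obstacle: we must prove that $G$ is not $3$-colourable. With $64$ vertices, naive enumeration is infeasible, and no structural argument (fractional, Hoffman-type, or odd-girth bounds) is strong enough at girth $6$. The approach I would take is computational but certified.
\begin{itemize}
\item Fix the colours on an edge to break colour symmetry.
\item Run a DPLL-style branching search with unit propagation. Each leaf exhibits a vertex whose three colours are all excluded by its neighbours.
\item Record the resulting refutation tree as a certificate, in the spirit of the $219{,}532$-node certificate mentioned in the abstract.
\end{itemize}
Soundness of such a certificate is elementary. Each internal node branches over all three colours of a chosen vertex, and each leaf is an immediate contradiction, so by induction on the tree no proper $3$-colouring extends the root assignment. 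The certificate can then be re-validated by an independent checker, and optionally by an off-the-shelf SAT solver producing a DRAT proof. Together with the $4$-colouring this gives $\chi(G)=4$, and hence $n_6(4)\le 64$.

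The risky step is finding a $B$ and voltages for which non-$3$-colourability actually holds. I would search over voltage assignments on small cubic-ish base graphs subject to the girth constraint, filtering candidates quickly with a SAT solver. For the theorem itself, however, once the graph is fixed, only the certification above is needed.
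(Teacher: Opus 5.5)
Your verification template matches the paper's: an explicit graph with a semiregular $\mathbb{Z}_8$-action, a finite girth check plus one explicit $6$-cycle, a listed proper $4$-colouring, and non-$3$-colourability via a re-checkable branching tree whose soundness follows by induction on the tree. The paper does exactly this last step in Lean, with a $219{,}532$-node forward-checking certificate.

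The gap is that you never produce the graph. The theorem is a pure existence statement, so its mathematical content is the witness itself. A search you \emph{would} run, which you call the risky step, proves nothing: nothing in your argument guarantees that any $\mathbb{Z}_8$-lift on $64$ vertices with girth $6$ fails to be $3$-colourable. This is not a formality. The paper's campaign sampled about $5.8$ million candidates and produced exactly one witness, and its searches below $64$ vertices found none. What is missing is the data of Table~\ref{tab:orbits}: the twenty orbit representatives $(i,t)$ under $v\mapsto v+8$ on $\mathbb{Z}_{64}$. You also need the colouring of Table~\ref{tab:col} and an actual refutation certificate. With these in hand, your three checks go through as you describe.

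There is also a counting slip in your setup. A semi-edge carries a voltage of order $2$ (here $4\in\mathbb{Z}_8$) and lifts to only $4$ edges. So ``$19$ base edges, semi-edges allowed, giving $152$ edges'' is consistent only if no semi-edges occur. The actual witness has $20$ edge orbits under $v\mapsto v+8$: eighteen of size $8$, and the two with $t=32$ of size $4$. Its base multigraph therefore has $18$ ordinary edges or loops plus $2$ semi-edges. Since $\operatorname{Aut}(G_{64})$ is exactly this $\mathbb{Z}_8$, no $19$-edge base graph of the kind you describe yields it. Relatedly, in your girth criterion, ``non-backtracking'' must count two consecutive traversals of the same semi-edge as backtracking, and likewise a loop traversed and immediately reversed. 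Both lift to immediate reversals in the cover.
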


\begin{theorem}\label{thm:lower}
Every graph of girth at least $6$ on at most $28$ vertices is $3$-colourable.
Consequently $n_6(4)\ge 29$.
\end{theorem}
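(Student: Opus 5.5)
The plan is to argue by minimal counterexample and reduce the statement to a finite, isomorph-free search that a computer can settle. Suppose some graph of girth at least $6$ on at most $28$ vertices is not $3$-colourable. Delete vertices and edges while $\chi\ge 4$ is preserved. This yields a $4$-critical subgraph $G$ with $n\le 28$ vertices, and its girth is still at least $6$, since taking subgraphs cannot create short cycles. Criticality gives the standard structure: minimum degree at least $3$, $G$ is $2$-connected, and no vertex cut induces a clique. It therefore suffices to show that no $4$-critical graph of girth at least $6$ has between $26$ and $28$ vertices. Orders at most $25$ are already excluded by Exoo--Goedgebeur, but we plan to re-derive them with the same pipeline as a consistency check.

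The key pruning tool is the edge-density bound of Liu and Postle for $4$-critical graphs of girth at least five, which has the form $|E(G)|\ge (5+\varepsilon)|V(G)|/3 - c$. For each $n\in\{26,27,28\}$ it gives an explicit lower bound $m_{\min}(n)$ on the number of edges. On the other side, the girth-$6$ condition caps the edge count: a Moore-type or extremal bound $\mathrm{ex}(n;\{C_3,C_4,C_5\})$ holds, and the exact extremal numbers for small $n$ are known. The search window is therefore $m_{\min}(n)\le m\le \mathrm{ex}(n;\{C_3,C_4,C_5\})$, which is narrow. We also add degree constraints. From $\delta\ge 3$ and the degree sum we get the possible degree sequences. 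Vertices of degree $3$ in a $4$-critical graph additionally satisfy local lemmas, for instance that two adjacent degree-$3$ vertices force certain configurations, and we would add these lemmas only if they are proved.

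We encode each case $(n,m)$ in SAT modulo symmetries (SMS). The variables are the adjacency entries, and the hard constraints are:
\begin{itemize}
\item no cycles of length $3$, $4$ or $5$, imposed as clauses over all short vertex tuples;
\item the degree bounds and the exact edge count $m$;
\item $4$-criticality, encoded as ``not $3$-colourable''.
\end{itemize}
Non-$3$-colourability is a co-NP property, so we handle it with co-certificate learning. Whenever the solver proposes a graph, an external solver looks for a $3$-colouring. If one exists, we add a clause that blocks every graph on which that colouring remains proper; the clause says some edge must join two equally coloured vertices. The SMS minimality check on the adjacency matrix ensures that only canonical representatives are explored, which gives isomorph-free exhaustiveness. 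An UNSAT answer in every case proves the theorem.

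To trust the pipeline, we validate it in both directions on known data. With the girth-$6$ clauses replaced by the corresponding girth-$4$ and girth-$5$ constraints, it should report UNSAT below $11$ and $21$ vertices respectively, and it should find witnesses at exactly $11$ and $21$. We would also have a DRAT-style proof log checked where feasible. The main obstacle is the sheer size of the search at $n=27,28$: the co-certificate loop may need a very large number of learned colouring clauses. To make it tractable, we would first strengthen pruning with the criticality-derived local constraints and with minimum-degree-$3$ symmetry breaking, and, if necessary, split each case by degree sequence and run the cubes in parallel.
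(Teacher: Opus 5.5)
Your proposal is correct and is essentially the paper's argument: pass to a $4$-critical subgraph (minimum degree $\ge 3$, girth $\ge 6$), impose the Liu--Postle density bound as a propositional edge-count constraint, and exhaust each order with SAT modulo symmetries plus co-certificate learning, validated on $n_4(4)=11$ and $n_5(4)=21$; the paper uses that bound in the form $|E|\ge\lceil(5n+2)/3\rceil$, not a $(5+\varepsilon)n/3-c$ form. The differences are minor: the paper runs the pipeline on every order $14\le n\le 28$ (the lower end from the Moore-type bound) rather than citing Exoo--Goedgebeur below $26$, and it uses none of your optional extras (extremal upper cap, degree-$3$ lemmas, splitting by $m$ or degree sequence).
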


Beyond the two theorems, the paper contributes structural obstructions and
machine-checkable evidence. Section~\ref{sec:obstructions} proves that no
smaller witness can be obtained from $G_{64}$ (or from the $66$-vertex graph
of \cite{ExooGoedgebeur2019}) by local modifications, that no witness on
$54$--$63$ vertices can be a Cayley graph (for orders $59$ and $61$, no
vertex-transitive witness exists at all), and that no witness on at most
$63$ vertices admits a semiregular automorphism group with two or three
vertex orbits, for any finite group. Since every known witness of an
$n_g(4)$ record with $g\ge 6$ is a lift of a small base graph along a
semiregular group action, these results close the most symmetric part of
the regime in which all such witnesses live. (The restriction to $g\ge 6$
is necessary: the Gr\"otzsch graph, with automorphism group of order $10$
on $11$ vertices, admits no nontrivial semiregular action.)

Theorem~\ref{thm:lower} closes the orders $26$, $27$ and $28$, which were
beyond the reach of \cite{ExooGoedgebeur2019}. It is obtained with the SAT
modulo symmetries framework of Kirchweger and Szeider
\cite{KirchwegerSzeider2024} together with co-certificate learning
\cite{KirchwegerPeitlSzeider2023}, both used as distributed; our
contribution is the encoding, in particular a reduction to $4$-critical
graphs that lets the edge-density bound of Liu and Postle
\cite{LiuPostle2017} enter as a propositional constraint, together with
the campaign and its audit trail. The change in cost is worth recording:
\cite{ExooGoedgebeur2019} spent about $2.5$ CPU-years to reach $25$
vertices, whereas the entire ladder $14\le n\le 28$ took about $113$ hours
on one core of a consumer machine.

Finally, every claim about $G_{64}$ is checked by several mutually
independent programs and, beyond that, certified in the Lean~4 proof
assistant (Section~\ref{sec:verification}). Verified certificate checkers
are well established; what we emphasize is the end result, unusual for
records of this kind: a reader can re-derive Theorem~\ref{thm:main}
mechanically in minutes, from a self-contained file whose only trusted
components are the Lean kernel and its compiled evaluator.

The paper is organized as follows. Section~\ref{sec:graph} defines $G_{64}$
and states its properties; Section~\ref{sec:search} documents the search
that found it; Section~\ref{sec:verification} describes the verification
and the Lean certification; Section~\ref{sec:lower} proves
Theorem~\ref{thm:lower}; Section~\ref{sec:obstructions} proves the
obstructions. All code, data, logs and the Lean formalization are available
at \url{https://github.com/glaucorampone/G64} and as ancillary files; the
graph itself is deposited in the House of Graphs \cite{HouseOfGraphs2023}.

\section{The graph}\label{sec:graph}

Let $G_{64}$ be the graph with vertex set $\mathbb{Z}_{64}=\{0,\dots,63\}$
whose edge set is the union of the orbits, under the map $v\mapsto v+8
\pmod{64}$, of the $20$ edges
\[
  \{i,\, i+t \bmod 64\}
\]
for the pairs $(i,t)$ listed in Table~\ref{tab:orbits}. Explicitly, the orbit
of the pair $(i,t)$ consists of the edges
$\{\,\{v,\,v+t \bmod 64\} : v\equiv i \pmod 8\,\}$; orbits with $t=32$ contain
$4$ distinct edges and all others contain $8$, for a total of $152$ edges.
The graph is drawn in Figure~\ref{fig:g64}.

\begin{table}[ht]
\centering
\begin{tabular}{llllllllll}
\toprule
$(0,15)$ & $(0,24)$ & $(0,49)$ & $(0,60)$ & $(1,5)$ &
$(1,22)$ & $(1,38)$ & $(1,42)$ & $(2,10)$ & $(2,32)$ \\
$(2,45)$ & $(2,52)$ & $(3,17)$ & $(3,32)$ & $(3,42)$ &
$(4,8)$ & $(5,1)$ & $(5,41)$ & $(5,42)$ & $(5,57)$ \\
\bottomrule
\end{tabular}
\caption{Edge-orbit representatives $(i,t)$ of $G_{64}$ under $v\mapsto v+8$.}
\label{tab:orbits}
\end{table}

\begin{proposition}\label{prop:props}
The graph $G_{64}$ is $4$-connected (indeed $\kappa=\lambda=\delta=4$), has
$64$ vertices, $152$ edges, $48$ vertices of degree $5$ and $16$ of degree $4$,
and satisfies:
\begin{enumerate}
\item[(i)] $G_{64}$ contains no cycle of length $3$, $4$ or $5$, and contains
  the $6$-cycle $(0,40,36,44,34,15)$; hence its girth is exactly $6$;
\item[(ii)] $G_{64}$ admits the proper $4$-colouring listed in
  Table~\ref{tab:col}, and admits no proper $3$-colouring; hence
  $\chi(G_{64})=4$;
\item[(iii)] $G_{64}$ is vertex-critical and edge-critical: for every vertex
  $v$ the graph $G_{64}-v$ is $3$-colourable, and for every edge $e$ the graph
  $G_{64}-e$ is $3$-colourable;
\item[(iv)] the independence number of $G_{64}$ is $24$;
\item[(v)] $\operatorname{Aut}(G_{64})$ is cyclic of order $8$, generated by
  the semiregular automorphism $v\mapsto v+8 \bmod 64$; consequently $G_{64}$
  has $8$ vertex orbits and $20$ edge orbits.
\end{enumerate}
\end{proposition}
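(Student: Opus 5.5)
The statement is a finite claim about one explicit graph, so the plan is to verify each item by exhaustive computation, organized to exploit the cyclic symmetry $\sigma\colon v\mapsto v+8$. Each check should be a small, independently re-implementable program.

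\emph{Basic parameters and girth.} First I expand the $20$ orbit representatives of Table~\ref{tab:orbits} into the edge list. I confirm there are $152$ distinct edges: the two $t=32$ orbits give $4$ edges each and the other $18$ give $8$ each, so $18\cdot 8+2\cdot 4=152$. I also check that no representative produces a loop or duplicates another orbit. Next I tabulate degrees, which gives $48$ vertices of degree $5$ and $16$ of degree $4$. Because $\sigma$ acts on vertex classes mod $8$, this only requires degrees of the residues $0,\dots,7$.

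For (i), I run a breadth-first search from each representative $v\in\{0,\dots,7\}$ and confirm that the ball of radius $2$ is a tree. This excludes cycles of length $3$ and $4$ through $v$. I then check that no two vertices at distance $2$ from $v$ are adjacent, which excludes $5$-cycles. By $\sigma$-invariance, this covers all vertices. Finally, I check the six listed adjacencies of the $6$-cycle directly.

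For connectivity, I compute $\lambda$ by max-flow and $\kappa$ by vertex-split max-flow from each representative $v\in\{0,\dots,7\}$ to all other vertices. Since $\delta=4$, it suffices to show that no separator of size $3$ exists, which gives $\kappa=\lambda=\delta=4$.

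\emph{Colourings.} The $4$-colouring in (ii) is checked edge by edge. The substantive point is that no proper $3$-colouring exists, and I expect this to be the main obstacle: naive search is hopeless and girth $6$ gives no local obstruction. I would first encode $3$-colourability as CNF, with symmetry breaking that fixes the colours of the two endpoints of one edge, and hand it to a SAT solver producing a DRAT proof checked by an independent verified checker. As a second, independent route, I would use a DSATUR-style branch-and-bound that records its search tree as a refutation certificate. Each leaf is a partial colouring in which some vertex has all three colours on its neighbours, and the certificate is replayable by a tiny checker. This is the kind of certificate that can later be validated in Lean (Section~\ref{sec:verification}).

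For (iii), I exhibit explicit $3$-colourings. By $\sigma$-symmetry, only $8$ vertex deletions and $20$ edge deletions (one per orbit) need witnesses. Since edge-criticality implies vertex-criticality in a graph without isolated vertices, the $20$ edge witnesses essentially suffice. Each is found by SAT and verified trivially.

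\emph{Independence number and automorphisms.} For (iv), I exhibit an independent set of size $24$. I then prove that none of size $25$ exists by a SAT or clique-solver run on the complement, or by a branch-and-bound certificate as above.

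For (v), I compute $\operatorname{Aut}(G_{64})$ with nauty or Traces, and cross-check with a second tool. Independently, I verify that $\sigma$ preserves the edge set, which is immediate from the construction, and that $\sigma$ has no fixed points in any nontrivial power, which gives semiregularity. The orbit counts then follow: $64/8=8$ vertex orbits, and the $20$ edge orbits are confirmed by checking that the listed representatives lie in pairwise distinct $\sigma$-orbits. Equality of $\operatorname{Aut}$ with $\langle\sigma\rangle$ rests on the canonical-labelling computation. I would corroborate it by checking that the stabiliser of vertex $0$ is trivial via individualisation and refinement, and that only the $8$ images under $\langle \sigma\rangle$ are reachable for vertex $0$.
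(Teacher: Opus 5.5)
Your proposal is correct and follows essentially the paper's route: every item is settled by exhaustive computation, with non-$3$-colourability decided both by a SAT solver and by a forward-checking (MRV/DSATUR-type) backtracking solver whose search tree serves as a replayable refutation certificate, which is exactly the certificate the paper re-checks in Lean. The differences are economies the paper does not use, namely reducing the girth checks to the $8$ vertex representatives and the criticality checks to $20$ edge-orbit witnesses under $v\mapsto v+8$ (the paper runs all $64+152$ colourability checks). These reductions are sound because that map is an automorphism by construction and every vertex has an incident edge.
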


Theorem~\ref{thm:main} follows from (i) and (ii). Item (iii) shows that
$G_{64}$ carries no smaller witness as a subgraph, and item (iv) shows that
non-$3$-colourability is not forced by a counting argument
($3\cdot 24 \ge 64$), which is one reason why short certificates of
$\chi\ge 4$ are not immediate for this graph. Item (v) records that the
symmetry exploited by the search of Section~\ref{sec:search} is the entire
symmetry of the graph, a point we return to in
Section~\ref{sec:obstructions}.

\begin{table}[ht]
\centering
\small
\begin{tabular}{l}
$c = (0, 0, 0, 0, 0, 0, 2, 0, 3, 0, 0, 0, 3, 0, 3, 3, 2, 1, 0, 0, 2, 1, 0, 2,$\\
$\phantom{c = (}3, 2, 1, 2, 1, 1, 0, 3, 0, 1, 2, 2, 2, 1, 0, 2, 3, 1, 3, 2, 0, 3, 2, 3,$\\
$\phantom{c = (}2, 3, 1, 3, 1, 1, 2, 2, 3, 0, 3, 0, 3, 1, 2, 3)$
\end{tabular}
\caption{A proper $4$-colouring $c$ of $G_{64}$ (colour of vertex $v$ at
position $v$, $v=0,\dots,63$), as verified in the Lean formalization.}
\label{tab:col}
\end{table}

\begin{figure}[p]
\centering
\input{fig_g64}
\caption{The graph $G_{64}$, drawn with vertex $v$ at position $v$ on a
circle (vertex $0$ at the top, clockwise), so that each edge orbit of
Table~\ref{tab:orbits} appears as a chord pattern repeated under the
rotation $v\mapsto v+8$. Vertex shapes and shades encode the four colour
classes of Table~\ref{tab:col} (class $0$: white circle, $1$: light-grey
square, $2$: grey triangle, $3$: black diamond); the bold cycle is the
$6$-cycle $(0,40,36,44,34,15)$ of Proposition~\ref{prop:props}(i).
The figure is generated from \texttt{record64.json} by
\texttt{gen\_fig\_g64.py}.}
\label{fig:g64}
\end{figure}

\section{How the graph was found}\label{sec:search}

The graph was found by a randomized search over graphs with a semiregular
cyclic automorphism, in the spirit of the searches of
\cite{ExooGoedgebeur2019}, but sweeping a broader family of factorizations. For
a factorization $n=rs$, the search samples maximal sets of edge-orbits under
$v\mapsto v+r \pmod{rs}$ subject to girth $\ge 6$ and to a cap on the degree of
each vertex class, maintained incrementally: by symmetry, adding an orbit
creates a short cycle if and only if it creates one through the representative
edge, which is checked by a single bounded breadth-first search. Candidates
that are dense enough to be $4$-chromatic (by the bound of Kostochka and Yancey
\cite{KostochkaYancey2014}, a $4$-critical graph has at least $(5n-2)/3$ edges;
the deposited code now uses the sharper girth-$5$ bound $(5n+2)/3$ of Liu and
Postle \cite{LiuPostle2017}, which the candidate that produced $G_{64}$, with
$152$ edges, passes as well)
are filtered by randomized DSATUR and tabu $3$-colouring heuristics, and
survivors are decided exactly by a backtracking solver and the SAT solver
CaDiCaL \cite{CaDiCaL}. The search family is the one of
\cite{ExooGoedgebeur2019}; what we add is a wider sweep of factorizations
and the incremental girth test. We document the search here so that the
discovery is reproducible.

The campaign that produced $G_{64}$ consisted of sixteen buckets $r\times s$
with $54\le rs\le 65$. The eighteen runs that terminated normally sampled
$5\,828\,084$ candidates in about $6.2$ CPU-hours and left $366$ instances that
resisted the heuristics, none of them a witness; $G_{64}$ arose in the
factorization $64=8\times 8$ and is the only witness the campaign ever
produced. The search is seeded, so the discovery reproduces from a cold start
in under five minutes; the log of that run and the tally above are deposited
with the code.

It is perhaps worth noting that the $66$-vertex witness of
\cite{ExooGoedgebeur2019} has $s=11$ prime, while our sweep covered
composite values of $s$ as well, and the new witness arose at the fully
composite factorization $64=8\times 8$. Whether this regime is genuinely
richer or the observation is an accident of a single sample, we cannot say.

\section{Verification}\label{sec:verification}

None of the claims of Section~\ref{sec:graph} need be taken on trust: each is
decided by more than one program, and Theorem~\ref{thm:main} is in addition
reduced to the kernel of a proof assistant.

All properties in Proposition~\ref{prop:props} were verified by independent
computations: girth by direct enumeration of paths and common neighbourhoods;
non-$3$-colourability both by an exact forward-checking backtracking solver
and by CaDiCaL (unsatisfiability of the standard colouring encoding);
criticality by $64+152$ further exact colourability checks; the independence
number by branch and bound.

In addition, Theorem~\ref{thm:main} is \emph{formally certified} in the
Lean~4 proof assistant \cite{Lean4}, in a self-contained file using only the
Lean core library. The formalization states and proves, for the concrete
adjacency structure of $G_{64}$:
(a) there is no cycle of length $3$, $4$ or $5$ (and the explicit $6$-cycle
exists); (b) the colouring of Table~\ref{tab:col} is proper; (c) no proper
$3$-colouring exists. For (c), a direct verified exhaustive search in the
style of a static-order backtracking is not feasible (we measured static
search trees exceeding $2\cdot 10^{8}$ nodes over hundreds of vertex
orderings), so the formalization instead uses a \emph{refutation
certificate}: the branching tree of an external forward-checking solver
($219{,}532$ internal nodes: $209{,}002$ MRV branching nodes plus
$10{,}530$ domain-wipeout nodes; serialized to $1.76$ million tokens) is
re-traversed inside Lean by a certificate checker, and a machine-checked
soundness theorem (proved by structural induction, using no evaluation
axioms) states that acceptance of any certificate implies that no proper
$3$-colouring exists. The certificate itself is untrusted data: a corrupted
certificate would simply be rejected by the checker. Beyond the kernel, the
only trusted component is Lean's compiled evaluation mechanism
(\texttt{native\_decide}), used to run the checker and the finite girth and
colouring checks; each of these evaluations is corroborated by the
independent computations above.

The Lean sources, the certificate, the adjacency data (including a
\texttt{graph6} string) and the search code are available as ancillary files.
The graph is also deposited in the House of Graphs \cite{HouseOfGraphs2023} as
graph \texttt{57236} (\url{https://houseofgraphs.org/graphs/57236}), where its
invariants (girth, chromatic number, connectivity and automorphism group
size among them) were recomputed by an independent implementation.

\section{An improved lower bound: $n_6(4)\ge 29$}\label{sec:lower}

The lower bound $n_6(4)\ge 26$ of \cite{ExooGoedgebeur2019} was obtained by
generating, with a girth-pruned extension of \texttt{geng}, all graphs with
minimum degree $3$, maximum degree at most $6$ and girth at least $6$ on
$19$--$25$ vertices ($\approx 2.5$ CPU-years) and $3$-colouring them all,
combined with a separate argument showing that a $4$-vertex-critical graph
of girth at least $6$ with maximum degree at least $7$ must have at least
$26$ vertices. We extend the exhaustion to $28$ vertices with a different
and substantially faster method.

\emph{Reduction to $4$-critical graphs.} Suppose some graph of girth
$\ge 6$ on at most $28$ vertices is not $3$-colourable. Take a subgraph
$H$ that is edge-minimal subject to having chromatic number $\ge 4$, and
delete its isolated vertices. The result is $4$-critical: its chromatic
number is exactly $4$ (deleting an edge lowers the chromatic number by at
most one), deleting any edge leaves a $3$-colourable graph by
edge-minimality, and deleting any vertex $v$ does too, since $v$ is
incident to some edge $e$ and $H-v$ is a subgraph of $H-e$. It still has
girth $\ge 6$ and some order $n'\le 28$. Every $4$-critical graph has minimum
degree $\ge 3$, and, being of girth $\ge 5$, has at least $(5n'+2)/3$ edges
by the bound of Liu and Postle \cite{LiuPostle2017}. Moreover $n'\ge 14$:
in a graph of minimum degree $3$ and girth $6$, the balls of radius $2$
around the two endpoints of any edge are disjoint trees, so
$n'\ge 2(1+2+4)=14$ (the $(3,6)$-cage is the Heawood graph). It therefore
suffices to show that for each $14\le n\le 28$ there is no graph on $n$
vertices with girth $\ge 6$, chromatic number $\ge 4$, minimum degree
$\ge 3$ and at least $\lceil (5n+2)/3\rceil$ edges. Note that, unlike in
\cite{ExooGoedgebeur2019}, no upper bound on the maximum degree is imposed.

\emph{Method.} For each $n$ we ran an exhaustive isomorph-free search in
the SAT modulo symmetries (SMS) framework of Kirchweger and Szeider
\cite{KirchwegerSzeider2024}: a CDCL SAT solver (CaDiCaL \cite{CaDiCaL})
equipped with a propagator that prunes partial adjacency matrices which
cannot extend to lexicographically minimal representatives of their
isomorphism classes. Girth, minimum degree and edge-count constraints are
encoded propositionally; the co-$\mathrm{NP}$ condition ``no proper
$3$-colouring exists'' is handled by \emph{co-certificate learning}
\cite{KirchwegerPeitlSzeider2023}: each time the solver proposes a graph, a
second solver searches for a proper $3$-colouring, and any colouring found
is added as a clause excluding every graph coloured by it. The minimality
check was run with a recursion cutoff, which can only weaken the symmetry
breaking; this is sound for nonexistence results (at worst, more than one
representative per isomorphism class is explored). Concretely, when the
cutoff is reached the checker increments a counter and returns without
emitting a symmetry-breaking clause, so the graph is left in the search
space: the explored space is a superset of a set of canonical
representatives, and exhausting a superset without a model implies that no
model exists. Over the whole ladder the cutoff was reached exactly once, at
$n=28$, in $1$ of the $44{,}418{,}587$ calls to the minimality checker at
that order; the solver's own statistics record this and are reproduced in
the deposited output. It would affect an exact-count claim, which we do not make, but not
Theorem~\ref{thm:lower}.

\emph{What is ours.} The framework, the minimality propagator and the
co-certificate learning mechanism are due to
\cite{KirchwegerSzeider2024,KirchwegerPeitlSzeider2023} and were used as
distributed. Our contribution to Theorem~\ref{thm:lower} is the encoding
above (in particular the reduction to $4$-critical graphs, which lets the
Liu--Postle density bound enter as a propositional edge-count constraint and
dispenses with any cap on the maximum degree), together with the campaign
and the audit trail described below.

\emph{Result.} For every $14\le n\le 28$ the search terminates with an
empty enumeration. This establishes Theorem~\ref{thm:lower}. The exploratory
ladder $14\le n\le 26$ ran on a single consumer laptop (WSL, one core per
instance) in under three hours in total; the figures quoted here are instead
those of the audited rerun, in which every order was executed on one core of
an Apple~M4 under one pinned toolchain, so that the per-order costs are
mutually comparable. The two new orders dominate the cost: $n=27$ took
$37{,}533$ seconds of solver time and $n=28$ took $361{,}682$ seconds, just
over four days. The per-order cost is collected in Table~\ref{tab:cost}.

\begin{table}[ht]
\centering
\begin{tabular}{lrrrrr}
\toprule
$n$ & $24$ & $25$ & $26$ & $27$ & $28$ \\
\midrule
solver time (s) & $286$ & $1{,}598$ & $6{,}485$ & $37{,}533$ & $361{,}682$ \\
\bottomrule
\end{tabular}
\caption{Solver time per order on one core of an Apple~M4, from the audited
rerun; each order costs between four and ten times the previous one, the
ratio rising to $9.6$ at the last step. For scale, the enumeration of
\cite{ExooGoedgebeur2019} required about $2.5$ CPU-years to cover the orders
$19$--$25$. The two computations are not strictly comparable
(\cite{ExooGoedgebeur2019} enumerate all graphs of girth $\ge 6$ with minimum
degree $3$ and maximum degree at most $6$, whereas the search here is
confined to the denser $4$-critical candidates and imposes no degree cap),
but the difference in scale is what allows the exhaustion to be pushed three
orders further on consumer hardware.}
\label{tab:cost}
\end{table}

\emph{Validation.} The identical pipeline was checked in both directions
against known values. In the nonexistence direction, the runs for
$14\le n\le 25$ re-derive the bound $n_6(4)\ge 26$ of
\cite{ExooGoedgebeur2019} by a disjoint method. In the existence direction: with girth
constraint $4$ (triangle-free) the pipeline enumerates on $11$ vertices
exactly one $4$-chromatic graph (the Gr\"otzsch graph, whose uniqueness is
Chv\'atal's theorem \cite{Chvatal1974}) and none on $10$, a setting
already used as a benchmark for co-certificate learning in
\cite{KirchwegerPeitlSzeider2023}, whose Table~1 covers triangle-free
non-$3$-colourable graphs on $10$--$14$ vertices; with girth constraint $5$
it finds on $21$ vertices a $4$-chromatic graph of girth $5$ within $39$
seconds (consistent with $n_5(4)=21$), whose girth and
non-$3$-colourability we re-verified independently by breadth-first search
and exhaustive backtracking. The encodings themselves are validated by an
ancillary harness that regenerates each deposited CNF from the recorded
encoder command (checking it byte-identical to the deposited SHA-256 hash),
fixes complete edge-variable assignments of candidate graphs, and checks
that satisfiability always coincides with a direct combinatorial evaluation
of the encoded predicate; models of free solves are likewise decoded and
re-checked. Finally, the whole ladder $14\le n\le 26$ was rerun in
audited form on a second machine and operating system, and the two new
orders $n=27$ and $n=28$ were run there in audited form as well, with the
solver's own exit code recorded verbatim at every level and its full output
deposited alongside hashes of the pinned toolchain.

\begin{remark}\label{rem:trust}
Unlike Theorem~\ref{thm:main}, whose proof is formally certified in Lean,
Theorem~\ref{thm:lower} rests on the correctness of the SMS toolchain and
of our encoding. The framework can emit machine-checkable certificates
(DRAT proofs for the SAT reasoning and a witnessing permutation for each
learned symmetry-breaking clause \cite{KirchwegerSzeider2024}), and an
end-to-end pipeline that checks SMS runs inside Lean, with LRAT proofs
imported by reflection, has very recently been announced
\cite{KirchwegerManriqueSzeider2026,Szeider2026}. Producing and formally
checking a complete certificate for the $n=26$, $n=27$ and $n=28$ runs
along those lines is left as future work. The validation above, and the
agreement with \cite{ExooGoedgebeur2019} on $14$--$25$ vertices, are our
present evidence of correctness.
\end{remark}

\section{Obstructions below 64}\label{sec:obstructions}

Whether $n_6(4)<64$ remains open (the exhaustive bound of
Section~\ref{sec:lower} stops at $28$ vertices), but we can prove that a
smaller witness, if it exists, cannot be obtained from $G_{64}$ by local
modifications, cannot be a Cayley graph, and cannot admit any semiregular
group action with two or three vertex orbits
(Proposition~\ref{prop:slices}). All computational claims in this
section were verified exhaustively; the scripts, including a re-verification
of Proposition~\ref{prop:rigid} that runs in seconds, are among the
ancillary files.

\begin{proposition}[Rigidity of $G_{64}$]\label{prop:rigid}
\begin{enumerate}
\item[(i)] $G_{64}$ has diameter $4$. In particular no two vertices are at
  distance $\ge 5$, so no edge can be added to $G_{64}$ without creating a
  cycle of length at most $5$; the same holds for $G_{64}-v$ for every
  vertex $v$.
\item[(ii)] For every pair of vertices $\{u,v\}$ (there are $256$ orbits of
  pairs under the automorphism $v\mapsto v+8$), the graph $G_{64}-\{u,v\}$
  admits at most $3$ girth-preserving edge additions, and \emph{every}
  maximal sequence of girth-preserving additions results in a $3$-colourable
  graph.
\item[(iii)] For every pair of nonadjacent vertices $u,v$ there exists a
  $u$--$v$ path of length $3$, $4$ or $5$. Consequently every identification
  of two nonadjacent vertices of $G_{64}$ creates a cycle of length at most
  $5$; identifying two \emph{adjacent} vertices, that is, contracting an
  edge $uv$, yields a $3$-colourable graph instead, since every proper
  $3$-colouring of $G_{64}-uv$, which exists by
  Proposition~\ref{prop:props}(iii), satisfies $c(u)=c(v)$ (otherwise it
  would properly $3$-colour $G_{64}$) and hence descends to the quotient.
\end{enumerate}
Hence no $4$-chromatic graph of girth $6$ on fewer than $64$ vertices can be
obtained from $G_{64}$ by deleting up to two vertices and adding
girth-preserving edges, or by identifying two vertices.
\end{proposition}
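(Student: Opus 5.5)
The proof is a finite computation on the explicit graph, organized so that the automorphism $\sigma\colon v\mapsto v+8$ of Proposition~\ref{prop:props}(v) cuts the work by a factor of $8$. The conceptual part is small: we reduce each item to a statement about distances or colourings that can be checked exhaustively, then say how each check runs.

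For (i), we run breadth-first search from the $8$ orbit representatives $0,\dots,7$; by symmetry this gives all eccentricities, and we check that every one is at most $4$. An added edge $uv$ then lies on a cycle of length $d(u,v)+1\le 5$. For $G_{64}-v$, deleting a vertex can increase distances, so this needs a separate check. For each representative $v\in\{0,\dots,7\}$ we compute all distances in $G_{64}-v$ and verify that every nonadjacent pair is at distance at most $4$. This amounts to $8$ all-pairs BFS computations on $63$ vertices.

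For (iii), a path of length $3$--$5$ between nonadjacent $u,v$ is equivalent to the following: either $d(u,v)\in\{3,4\}$, or $d(u,v)=2$ and there is also a walk of length $3$, $4$ or $5$ that closes a short cycle. Distance alone is not enough when $d(u,v)=2$. So for each representative $u$ and each nonadjacent $v$ we search directly for a simple path of length exactly $3$, $4$ or $5$, by depth-limited DFS, which is cheap at degree $\le 5$. The contraction claim then follows formally, as stated: by Proposition~\ref{prop:props}(iii) a proper $3$-colouring of $G_{64}-uv$ exists. It must have $c(u)=c(v)$, since otherwise it would properly colour $G_{64}$, so it is constant on the identified pair and induces a proper colouring of $G_{64}/uv$. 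For identifications of nonadjacent pairs, a $u$--$v$ path of length $\ell\in\{3,4,5\}$ becomes a closed walk of length $\ell$ through the merged vertex. Its vertices other than the merged one are distinct, so it is a cycle of length $\ell\le 5$.

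For (ii), we enumerate the $256$ orbits of unordered pairs $\{u,v\}$, one representative each. For each, we compute the set $A$ of nonadjacent pairs at distance $\ge 5$ in $G_{64}-\{u,v\}$; these are exactly the girth-preserving single additions. We then explore the poset of addition sequences by DFS over sets rather than sequences, deduplicating by the set of added edges, and after each addition recompute which remaining candidates are still at distance $\ge 5$. At every maximal set, which is a leaf, we decide $3$-colourability with an exact solver. The bound $|A|\le 3$ keeps this tree tiny, at most $16$ subsets per pair, so the cost is dominated by roughly $256\times$ a few SAT/backtracking calls. As a shortcut, a leaf graph is a subgraph-plus-edges of $G_{64}-u$, and we can first try to extend a known colouring of $G_{64}-u$ before calling the solver.

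The final ``Hence'' combines three cases. Deleting $0$ vertices gives (i). Deleting one vertex: $G_{64}-v$ admits no additions by (i), and it is $3$-colourable by Proposition~\ref{prop:props}(iii). Deleting two vertices: every maximal sequence gives a $3$-colourable graph, and any non-maximal sequence gives a subgraph of a maximal one, so it is $3$-colourable too. Identification is covered by (iii). The main obstacle is not mathematical but lies in making (ii) trustworthy. We must make sure that ``girth-preserving'' is rechecked after each addition, since additions interact, and that maximality is handled by exploring all branches rather than one greedy order. I would cross-validate (ii) with two independent implementations, as is done for Proposition~\ref{prop:props}.
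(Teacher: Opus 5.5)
Your proposal is correct and follows essentially the paper's route: the paper likewise establishes (i)--(iii) by exhaustive computer checks that exploit the symmetry $v\mapsto v+8$ (hence its count of $256$ orbits of pairs), and it gives the contraction and identification arguments exactly as you do. The only blemish is your informal ``equivalence'' for $d(u,v)=2$: by girth $6$ such a pair has no path of length $3$, and the real condition is that the unique path $u$--$w$--$v$ lies on a $6$- or $7$-cycle. This does not affect the argument, since you then search directly for simple $u$--$v$ paths of length $3$--$5$.
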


We note, concerning (iii), that identifying two nonadjacent vertices $u,v$ of
a non-$3$-colourable graph always preserves non-$3$-colourability (a proper
$3$-colouring of the quotient is a proper $3$-colouring of the original graph
with $c(u)=c(v)$), and that paths of length $2$ collapse harmlessly to single
edges in the quotient; so a nonadjacent pair whose only short connections
have length $2$ would immediately yield a $63$-vertex witness. Item (iii)
rules this out. The same rigidity phenomena hold for the $66$-vertex graph of
\cite{ExooGoedgebeur2019}, which we re-verified: it has diameter $4$ and no
distance-$\ge 5$ pair survives any single vertex deletion.

The second obstruction concerns symmetry. Cayley graphs are a natural
candidate family here (the smallest Cayley witness found in
\cite{ExooGoedgebeur2019} has order $96$), and the witness $G_{64}$ admits
a \emph{semiregular} automorphism and, by
Proposition~\ref{prop:props}(v), nothing beyond it; but no witness on
$54$--$63$ vertices can be a Cayley graph:

\begin{proposition}\label{prop:cayley}
\begin{enumerate}
\item[(i)] Every Cayley graph of an abelian group with degree at least $3$
  has girth at most $4$: if $s,t$ are connection-set elements with
  $t\notin\{s,s^{-1}\}$, then $(e,\,s,\,s+t,\,t)$ is a $4$-cycle.
\item[(ii)] No Cayley graph of a (generalized) dihedral group
  $\mathrm{Dih}(A)$ has both girth $\ge 6$ and chromatic number $\ge 4$: a
  connection set containing a ``rotation'' $a\in A$ and a ``reflection''
  $\rho$ yields the $4$-cycle $(e,\,\rho,\,\rho a,\,a^{-1})$; a rotation-only
  set falls under (i); and a reflection-only set gives a bipartite graph.
\item[(iii)] For every remaining group, that is, every group of orders
  $54$--$63$ that is neither abelian nor generalized dihedral (including
  $A_5$, the dicyclic groups, the Frobenius groups
  $\mathbb{Z}_{11}\rtimes\mathbb{Z}_5$, $\mathbb{Z}_{19}\rtimes\mathbb{Z}_3$
  and $\mathbb{Z}_2^3\rtimes\mathbb{Z}_7$, the semidirect products
  $\mathbb{Z}_7\rtimes\mathbb{Z}_8$ and $\mathbb{Z}_7\rtimes\mathbb{Z}_9$
  (whose actions are not faithful, so they are not Frobenius), the remaining
  semidirect products of
  orders $54$, $56$ and $60$, and direct products of smaller dihedral,
  dicyclic and alternating groups with cyclic groups), an exhaustive
  enumeration of all connection sets of degrees $4$ to $7$ finds no Cayley
  graph with girth $\ge 6$ and $\chi\ge 4$. This range suffices: degrees
  $\le 3$ are excluded by Brooks' theorem \cite{Brooks1941}, since $K_4$
  has girth $3$ and odd cycles are $3$-colourable, while the Moore-type
  bound for girth $6$ (the balls of radius $2$ around the endpoints of an
  edge are disjoint, so $n\ge 2(d^2-d+1)$) already makes degree $7$
  impossible below $86$ vertices, so the enumerated range includes a
  margin. For groups of odd order the connection sets have even size, so
  only degrees $4$ and $6$ occur there.
\end{enumerate}
Consequently no $4$-chromatic Cayley graph of girth $\ge 6$ exists on
$54$--$63$ vertices. For $n=59$ and $n=61$, since every vertex-transitive
graph of prime order is a circulant \cite{Turner1967}, no vertex-transitive
witness exists at all.
\end{proposition}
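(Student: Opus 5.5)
The argument splits along the three items, with the classification of groups of orders $54$--$63$ tying them together; the computation itself is in (iii).

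For (i), write the abelian group additively and let $S$ be the connection set, closed under inverses. Degree at least $3$ means $|S|\ge 3$. So for any $s\in S$ some $t\in S$ lies outside $\{s,-s\}$. The closed walk $0,s,s+t,t,0$ has steps $s,t,-s,-t$, all in $S$. Its four vertices are distinct: $s\ne t$ and $s+t\ne 0$ because $t\ne s$ and $t\ne -s$, while $s,t\ne 0$ and $s+t\notin\{s,t\}$. Hence it is a $4$-cycle, and the girth is at most $4$.

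For (ii), write $\mathrm{Dih}(A)=A\rtimes\langle\rho\rangle$, where every element outside $A$ is an involution inverting $A$. I split according to which kinds of elements lie in $S$.
\begin{itemize}
\item If $S$ contains a rotation $a\ne e$ and a reflection $\rho'$, the walk $e,\rho',\rho' a,a^{-1}$ returns to $e$. The step from $\rho' a$ to $a^{-1}$ uses $(\rho' a)^{-1}a^{-1}=a^{-1}\rho' a^{-1}=\rho'$, since $a^{-1}\rho'=\rho' a$. Its vertices are distinct, so it is a $4$-cycle, using only that $a\ne a^{-1}$ or handling $a$ of order $2$ directly.
\item If $S\subseteq A$, the graph is a disjoint union of Cayley graphs of abelian groups. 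Part (i) applies in degree $\ge3$, and Brooks' theorem applies otherwise.
\item If $S$ consists only of reflections, every edge joins $A$ to its complement, so the graph is bipartite.
\end{itemize}
In every case girth $\ge 6$ and $\chi\ge4$ cannot hold together.

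For (iii), first I list all groups of orders $54$--$63$ from the standard small-group classification, for example via GAP's SmallGroups library. Removing the abelian and generalized dihedral ones leaves the stated nonabelian list.

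Next I bound the degree $d$. Brooks' theorem gives $d\ge 4$: a connected graph with $\Delta\le 3$ that is neither $K_4$ nor an odd cycle is $3$-colourable, and girth $6$ excludes $K_4$. The Moore-type count gives $n\ge 2(1+(d-1)+(d-1)^2)=2(d^2-d+1)$, since the radius-$2$ balls around the ends of an edge are disjoint trees. This forces $d\le 5$ at $n\le 63$, because $d=6$ already needs $62$ vertices, and only $62$--$63$ allow it. Enumerating $d\le 7$ is therefore safely exhaustive.

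For each group I would enumerate inverse-closed connection sets of each size up to the action of $\mathrm{Aut}(G)$, to cut symmetric duplicates. Girth is tested by a short BFS from the identity, using vertex-transitivity. The few survivors get an exact $3$-colouring check by SAT or backtracking.

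The main obstacle is the size of this enumeration together with the completeness of the group list. The group list is certified by citing the SmallGroups library counts. Automorphism orbits and early girth pruning (building $S$ incrementally and rejecting any short relation among chosen elements) should keep the search small.

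For the final sentence, at orders $59$ and $61$ every group is cyclic, hence abelian, so part (i) rules out all Cayley witnesses. By Turner's theorem every vertex-transitive graph of prime order is a circulant, so no vertex-transitive witness exists at those orders either.
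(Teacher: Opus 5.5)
Your proposal is correct and follows the paper's route. It uses explicit $4$-cycles and the coset bipartition for (i)--(ii), Brooks' theorem and the Moore-type count to bound the degree, an exhaustive girth-pruned enumeration of connection sets over the classified groups of orders $54$--$63$ for (iii), and Turner's theorem at the prime orders. There are two harmless slips. First, $n\ge 2(d^2-d+1)$ gives $d\le 6$ for $n\le 63$ (degree $6$ is possible at $n=62,63$), not $d\le 5$, though your enumeration up to $d=7$ covers this anyway. Second, in (ii) the vertices $e,\rho',\rho'a,a^{-1}$ are distinct for every $a\ne e$ (two lie in $A$, two outside), so rotations $a$ of order $2$ need no separate treatment.
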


Item (i) is folklore, and items (i) and (ii) fit a known pattern: the
closer a group is to being abelian, the smaller the girth of its Cayley
graphs; see \cite{ConderExooJajcay2010} for quantitative versions for
nilpotent and solvable groups in the context of cage constructions.

The enumeration behind (iii) is feasible because Cayley graphs are
vertex-transitive: the girth equals the length of a shortest cycle through
the identity, which can be tested locally and monotonically along a
backtracking search over connection sets. The quantification over
\emph{all} groups is itself machine-verified: an ancillary script
(\texttt{group\_census.py}) constructs explicit multiplication tables for
every isomorphism class of orders $54$--$63$: it derives the candidate
lists from Sylow-theoretic normal-subgroup arguments, checks the group
axioms in full, checks the classes pairwise non-isomorphic, and checks
their number against the classical classification \cite{BescheEickOBrien2002}
($55$ classes in total); it then maps each class to the lemma or to the
enumeration log that covers it. The search spaces collapse dramatically under the girth
constraint (for instance, of the $16{,}926$ girth-$\ge 6$ connection sets
of degree $5$ on the dihedral group of order $62$, every single one is
reflection-only and hence bipartite), and the non-bipartite girth-$6$
Cayley graphs that exist in this range are few (all of degree $4$,
e.g.\ $315$ graphs on $\mathbb{Z}_7\rtimes\mathbb{Z}_9$, except for
$40$ of degree $5$ on $F_{20}\times\mathbb{Z}_3$), and all are easily
$3$-colourable. This may be read as an explanation of why the
witnesses at both $64$ and $66$ vertices possess semiregular, but not
transitive, symmetry.

The third obstruction extends the second to exactly that semiregular
symmetry. For a finite group $H$ and an integer $r\ge 2$, consider the
graphs on $n=r|H|$ vertices admitting a semiregular action of $H$ with $r$
vertex orbits (``fibers''), equivalently lifts of an $r$-vertex base
multigraph with voltages in $H$ \cite{Gross1974}. For $H=\mathbb{Z}_s$
these are the cyclic lifts of Section~\ref{sec:search}, and $G_{64}$ itself
is the case $H=\mathbb{Z}_8$, $r=8$. Every such graph is a union of edge
\emph{classes} (orbits of vertex pairs under the action), so an entire
symmetry type is encoded by one Boolean variable per class, and ``does this
type contain a witness?'' becomes a satisfiability question: cycles of
length $3$ and $4$ are excluded by clauses through fiber representatives,
longer short cycles lazily from proposed models, and
non-$3$-colourability is enforced by co-certificate learning
\cite{KirchwegerPeitlSzeider2023} exactly as in Section~\ref{sec:lower}.
The search is exhaustive over the type: it terminates either by producing
a witness or by proving that none exists.

\begin{proposition}\label{prop:slices}
For every $n\le 63$, every finite group $H$ and every $r\in\{2,3\}$ with
$r|H|=n$, no graph on $n$ vertices with girth $\ge 6$ and chromatic number
$\ge 4$ admits a semiregular action of $H$ with $r$ vertex orbits.
Moreover, the same holds for the types $(H,r)$ with $r\ge 4$ listed in
Table~\ref{tab:slices}.
\end{proposition}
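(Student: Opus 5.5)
We reduce the statement to a finite list of SAT instances, one per type $(H,r)$, and run each to exhaustion with the encoding just described.

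\emph{Reductions.} First, by the argument of Section~\ref{sec:lower} and Theorem~\ref{thm:lower}, every graph of girth $\ge 6$ on at most $28$ vertices is $3$-colourable. So only $29\le n\le 63$ matters, and for $r\in\{2,3\}$ this leaves $|H|\in\{15,\dots,31\}$ with $r=2$ and $|H|\in\{10,\dots,21\}$ with $r=3$. Second, the ``semiregular action of $H$ with $r$ orbits'' condition depends only on the isomorphism class of $H$. We therefore enumerate the groups of each required order up to isomorphism, reusing the census machinery of \texttt{group\_census.py} with explicit multiplication tables checked against \cite{BescheEickOBrien2002}. Third, we pass to a $4$-critical subgraph, which may not inherit the action. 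We will not reduce this way. Instead we impose the weaker conditions that follow directly, so the encoding stays over the full symmetric type and remains sound for nonexistence. Non-$3$-colourability is handled by co-certificate learning. Optionally we add the Liu--Postle edge bound, but only as an implied constraint on a critical subgraph; to be safe we omit it from the encoding and use it only as a sanity filter.

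\emph{Encoding one type.} Fix fibers $F_1,\dots,F_r$, each a copy of $H$. An edge class is the $H$-orbit of a pair $\{(i,1),(j,h)\}$, where $(i,j,h)$ is taken modulo the identifications $h\sim h^{-1}$ when $i=j$ and orbit reversal. We introduce one variable per class. Cycles of length $3$ and $4$ are forbidden by clauses through fiber representatives. By semiregularity, every short cycle is an $H$-translate of one through $(i,1)$ for some $i$, so enumerating closed walks of length $\le 4$ from the $r$ representatives in the voltage multigraph gives a finite clause set. Cycles of length $5$ are excluded lazily: for each proposed model we run a BFS from the $r$ representatives and add the clause forbidding the offending set of classes. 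Whenever the solver proposes a girth-$\ge 6$ model, a second solver seeks a proper $3$-colouring of the lifted graph, and any colouring found is learned as the clause ``some edge monochromatic under $c$ is present.'' Unsatisfiability of an instance then proves the claim for that type. Symmetry breaking under $\mathrm{Aut}(H)$ and fiber permutations is optional and is not needed for soundness.

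\emph{Obstacle.} The main difficulty is scale for the largest types: $r=3$ with $|H|=21$, and $r=2$ with $|H|$ near $31$. There co-certificate learning may need very many colourings. We would first try to seed the instance with random $H$-invariant-free colourings, and add a degree cap derived from the Moore-type bound $n\ge 2(d^2-d+1)$, which gives $\Delta\le 6$ on $63$ vertices only for regular graphs. If that is still too slow, we would split on the base multigraph, i.e.\ on which intra- and inter-fiber class counts are present. The types with $r\ge4$ in Table~\ref{tab:slices} are handled identically, and we record each run's log.
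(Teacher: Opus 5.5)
Your plan is sound, and its core matches the search the paper runs: one Boolean variable per edge class, $3$- and $4$-cycles excluded through fiber representatives, $5$-cycles added lazily, non-$3$-colourability by co-certificate learning, one run per isomorphism class of $H$, and orders $\le 28$ discharged by Theorem~\ref{thm:lower}.

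Where you differ is the minimum degree. The paper runs every type with the extra constraint $\delta\ge 3$, and pays for that pruning with a descent on $r$:
\begin{itemize}
\item degrees are constant on fibers, and a fiber of degree $\le 2$ meets no $4$-critical subgraph;
\item so deleting such a fiber leaves a witness with the same $H$-action and one orbit fewer;
\item thus $(H,3)$ falls to $(H,2)$, and $(H,2)$ to the Cayley case $(H,1)$ of order at most $31$;
\item that base case needs Theorem~\ref{thm:lower}, Proposition~\ref{prop:cayley}(i)--(ii), and a separate enumeration for $\mathbb{Z}_3\times D_5$ and $\mathbb{Z}_5\times D_3$ at order $30$.
\end{itemize}
You search all $H$-invariant graphs of girth $\ge 6$, so an empty run settles the type outright with no base cases. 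The price is a larger space, including the low-degree region the paper cuts away.

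Your observation that a $4$-critical subgraph need not inherit the action is right. But whole-fiber deletion is exactly the action-compatible part of that reduction, and it is the sound remedy for the scale problem you anticipate.

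The remedy you actually propose is not sound. The count $n\ge 2(d^2-d+1)$ bounds the minimum degree (the degree, for a regular graph), not $\Delta$. Your lifts need not be regular, since different fibers may have different degrees, and the count does not exclude a fiber of degree $7$ or more at $n\le 63$. Imposing $\Delta\le 6$ would therefore discard potential witnesses, and an empty run would prove nothing, so that cap must be dropped.
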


\begin{proof}[Proof (computational)]
For $n\le 28$ the statement follows from Theorem~\ref{thm:lower}, so let
$30\le n\le 63$ (a semiregular action with $2$ or $3$ orbits forces
$2\mid n$ or $3\mid n$, so $n=29$ is vacuous). This gives $106$ pairs
$(H,r)$: the $66$ isomorphism classes of orders $15$--$31$ at $r=2$ and
the $40$ classes of orders $10$--$21$ at $r=3$. The exhaustive search
described above, run with the additional constraint of minimum degree
$3$, terminates with an empty enumeration on every one of them. The
minimum-degree constraint is discharged by induction on $r$: vertex
degrees are constant along fibers, and a fiber of degree $\le 2$ is
disjoint from every $4$-critical subgraph, so a hypothetical witness
violating the constraint reduces, by deleting that fiber, to a witness
with the same action and one fiber fewer. From $(H,3)$ this lands in
$(H,2)$, covered by the same argument at order $2|H|\le 42$ (or by
Theorem~\ref{thm:lower} when $2|H|\le 28$); from $(H,2)$ it lands in the
Cayley case $(H,1)$ of order $|H|\le 31$: orders up to $28$ are excluded
by Theorem~\ref{thm:lower}, orders $29$ and $31$ by
Proposition~\ref{prop:cayley}(i), and order $30$ by
Proposition~\ref{prop:cayley}(i)--(ii) for $\mathbb{Z}_{30}$ and $D_{15}$
together with an exhaustive enumeration (as in
Proposition~\ref{prop:cayley}(iii), degrees $4$--$5$) for
$\mathbb{Z}_3{\times}D_5$ and $\mathbb{Z}_5{\times}D_3$. The same
descent discharges the constraint for the types of
Table~\ref{tab:slices}, whose fiber-deleted cases all land in the
$r\in\{2,3\}$ statement or below $29$ vertices. The quantification over
all isomorphism classes of each order is verified by the same census
script as in Proposition~\ref{prop:cayley}, which also maps every class
to its exhaustion log.
\end{proof}

\begin{table}[ht]
\centering\small
\begin{tabular}{rll}
\toprule
$n$ & $r$ & exhausted types with $r\ge 4$\\
\midrule
$40$ & $4$ & $(\mathbb{Z}_{10},4)$, $(D_5,4)$ (both groups of order $10$)\\
$36$ & $4$ & $(\mathbb{Z}_9,4)$, $(\mathbb{Z}_3^2,4)$ (both groups of order $9$)\\
$32$ & $4$ & $(\mathbb{Z}_8,4)$, $(\mathbb{Z}_2{\times}\mathbb{Z}_4,4)$,
             $(\mathbb{Z}_2^3,4)$, $(D_4,4)$, $(Q_8,4)$ (all five groups of
             order $8$)\\
$30$ & $5$ & $(\mathbb{Z}_6,5)$, $(D_3,5)$ (both groups of order $6$)\\
\bottomrule
\end{tabular}
\caption{The exhausted semiregular types with more than three orbits
(Proposition~\ref{prop:slices}); the types with $r\in\{2,3\}$ are all of
them, for every order, and are not listed. $D_m$ is the dihedral group of
order $2m$ and $Q_8$ the quaternion group. The per-type model counts are
recorded in the deposited logs.}
\label{tab:slices}
\end{table}

Since a semiregular action of $H$ restricts to a semiregular action of any
subgroup $K\le H$, with $[H:K]$ times as many orbits,
Proposition~\ref{prop:slices} has a corollary complementing
Proposition~\ref{prop:cayley} below order $54$: no witness on at most $63$
vertices is a Cayley graph of a group possessing a subgroup of index $2$
or $3$.

Three positive controls validate the machinery. On the type
$(\mathbb{Z}_{11},6)$ at $n=66$ the same search terminates in seven
minutes by \emph{finding} a witness, isomorphic to the $66$-vertex graph
of \cite{ExooGoedgebeur2019}; at girth $5$ on $(\mathbb{Z}_7,3)$ it
immediately finds a $21$-vertex non-$3$-colourable graph of girth $5$
(matching $n_5(4)=21$); and $G_{64}$ is verified to be a union of $20$
classes of the $(\mathbb{Z}_8,8)$ type. Runs at $r|H|\le 28$ terminate
empty, consistently with Theorem~\ref{thm:lower}. The proposition covers,
in particular, the hottest regions of our randomized searches: the types
$(\mathbb{Z}_{21},3)$ at $n=63$ and $(D_{15},2)$ at $n=60$, whose random
sampling had produced thousands of near-$4$-chromatic candidates, are
provably empty.

Every known witness of an $n_g(4)$ record with $g\ge 6$ ($G_{64}$, and
the graphs on $66$ and $171$ vertices of \cite{ExooGoedgebeur2019}, the
latter described there as LCF graphs) is a lift with a semiregular cyclic
group action, with $8$, $6$ and $9$ orbits
respectively. Proposition~\ref{prop:slices} closes the more symmetric end
of this regime below $64$ vertices completely (up to $3$ orbits, every
group and every order) and parts of the rest (Table~\ref{tab:slices});
what remains open at $n=63$ are the types $(\mathbb{Z}_9,7)$ and
$(\mathbb{Z}_3{\times}\mathbb{Z}_3,7)$, which remained undecided after
$7.4$ and $2.6$ million candidate models respectively, and
$(\mathbb{Z}_7,9)$ and $(\mathbb{Z}_3,21)$, which we did not attempt; the
analogous finding run on the $(\mathbb{Z}_8,8)$ type at $n=64$, which must
terminate with a witness, was likewise not run to completion.

\section{Concluding remarks}

The bounds on $n_6(4)$ now stand at
\[
  29 \;\le\; n_6(4) \;\le\; 64.
\]
The lower-bound computation of Section~\ref{sec:lower} might be pushed, by
the observed growth rate, one or two orders further (the $n=28$ level
already took just over four days on a single core, and the cost ratio rose
to $9.6$ at that step, so $n=29$ is a matter of weeks on modest hardware),
but not to the range
$54$--$63$ where a smaller witness, if any, is most plausibly to be found.
Our searches below $64$ vertices found no smaller witness. They comprised
over twenty-five million randomized candidates for $54\le n\le 63$, drawn
from semiregular cyclic factorizations and from lifts of small base graphs
with voltage assignments in non-abelian groups, together with asymmetric
local searches at $n=60,62,63$ warm-started from mutilated copies of both
known witnesses, and the exhaustive analyses of
Section~\ref{sec:obstructions}. Among these
candidates, roughly $10^4$ resisted randomized heuristic $3$-colouring and
were decided by exact solvers: every one of them is $3$-colourable. The
random searches are far from exhaustive, and we do not consider the
evidence sufficient to conjecture whether $n_6(4)=64$; what
Section~\ref{sec:obstructions} does establish is that a smaller witness,
if one exists, has essentially none of the symmetry that produced every
witness known so far.

We close with three natural directions. First, deciding the remaining
semiregular types at $n=63$ (notably $(\mathbb{Z}_9,7)$ and
$(\mathbb{Z}_3^2,7)$) would make the symmetry obstruction complete at that
order. Second, Theorem~\ref{thm:lower} could be brought to the same
standard of evidence as Theorem~\ref{thm:main} by producing and formally
checking certificates for the SMS runs (Remark~\ref{rem:trust}). Third,
the graph $G_{64}$, being $4$-critical with a transparent algebraic
description, may be of independent interest as a test case for colouring
algorithms and for formal verification of computational combinatorics.

\subsection*{Acknowledgements}
The search pipeline, the verification scripts and the Lean formalization were
developed with the assistance of an AI system (Claude, Anthropic); all final
artifacts are deterministic and independently checkable. We thank the authors
of \cite{ExooGoedgebeur2019} for their clear account of the state of the art,
and Andrea Nicastro for suggesting the use of Lean for the formal
verification.

\end{document}

%% file: fig_g64.tex
\begin{tikzpicture}[
  v0/.style={circle,draw=black,fill=white,inner sep=0pt,minimum size=6.4pt,line width=0.5pt},
  v1/.style={rectangle,draw=black,fill=black!25,inner sep=0pt,minimum size=5.8pt,line width=0.5pt},
  v2/.style={regular polygon,regular polygon sides=3,draw=black,fill=black!55,inner sep=0pt,minimum size=7.6pt,line width=0.5pt},
  v3/.style={diamond,draw=black,fill=black,inner sep=0pt,minimum size=7.2pt,line width=0.5pt},
  e/.style={line width=0.3pt,black!35},
  w/.style={line width=1.3pt,black}
]
\coordinate (p0) at (0.0000,7.0000);
\coordinate (p1) at (0.6861,6.9663);
\coordinate (p2) at (1.3656,6.8655);
\coordinate (p3) at (2.0320,6.6986);
\coordinate (p4) at (2.6788,6.4672);
\coordinate (p5) at (3.2998,6.1734);
\coordinate (p6) at (3.8890,5.8203);
\coordinate (p7) at (4.4408,5.4111);
\coordinate (p8) at (4.9497,4.9497);
\coordinate (p9) at (5.4111,4.4408);
\coordinate (p10) at (5.8203,3.8890);
\coordinate (p11) at (6.1734,3.2998);
\coordinate (p12) at (6.4672,2.6788);
\coordinate (p13) at (6.6986,2.0320);
\coordinate (p14) at (6.8655,1.3656);
\coordinate (p15) at (6.9663,0.6861);
\coordinate (p16) at (7.0000,0.0000);
\coordinate (p17) at (6.9663,-0.6861);
\coordinate (p18) at (6.8655,-1.3656);
\coordinate (p19) at (6.6986,-2.0320);
\coordinate (p20) at (6.4672,-2.6788);
\coordinate (p21) at (6.1734,-3.2998);
\coordinate (p22) at (5.8203,-3.8890);
\coordinate (p23) at (5.4111,-4.4408);
\coordinate (p24) at (4.9497,-4.9497);
\coordinate (p25) at (4.4408,-5.4111);
\coordinate (p26) at (3.8890,-5.8203);
\coordinate (p27) at (3.2998,-6.1734);
\coordinate (p28) at (2.6788,-6.4672);
\coordinate (p29) at (2.0320,-6.6986);
\coordinate (p30) at (1.3656,-6.8655);
\coordinate (p31) at (0.6861,-6.9663);
\coordinate (p32) at (0.0000,-7.0000);
\coordinate (p33) at (-0.6861,-6.9663);
\coordinate (p34) at (-1.3656,-6.8655);
\coordinate (p35) at (-2.0320,-6.6986);
\coordinate (p36) at (-2.6788,-6.4672);
\coordinate (p37) at (-3.2998,-6.1734);
\coordinate (p38) at (-3.8890,-5.8203);
\coordinate (p39) at (-4.4408,-5.4111);
\coordinate (p40) at (-4.9497,-4.9497);
\coordinate (p41) at (-5.4111,-4.4408);
\coordinate (p42) at (-5.8203,-3.8890);
\coordinate (p43) at (-6.1734,-3.2998);
\coordinate (p44) at (-6.4672,-2.6788);
\coordinate (p45) at (-6.6986,-2.0320);
\coordinate (p46) at (-6.8655,-1.3656);
\coordinate (p47) at (-6.9663,-0.6861);
\coordinate (p48) at (-7.0000,-0.0000);
\coordinate (p49) at (-6.9663,0.6861);
\coordinate (p50) at (-6.8655,1.3656);
\coordinate (p51) at (-6.6986,2.0320);
\coordinate (p52) at (-6.4672,2.6788);
\coordinate (p53) at (-6.1734,3.2998);
\coordinate (p54) at (-5.8203,3.8890);
\coordinate (p55) at (-5.4111,4.4408);
\coordinate (p56) at (-4.9497,4.9497);
\coordinate (p57) at (-4.4408,5.4111);
\coordinate (p58) at (-3.8890,5.8203);
\coordinate (p59) at (-3.2998,6.1734);
\coordinate (p60) at (-2.6788,6.4672);
\coordinate (p61) at (-2.0320,6.6986);
\coordinate (p62) at (-1.3656,6.8655);
\coordinate (p63) at (-0.6861,6.9663);
\draw[e] (p0) -- (p24);
\draw[e] (p0) -- (p49);
\draw[e] (p0) -- (p60);
\draw[e] (p1) -- (p6);
\draw[e] (p1) -- (p16);
\draw[e] (p1) -- (p23);
\draw[e] (p1) -- (p39);
\draw[e] (p1) -- (p43);
\draw[e] (p2) -- (p12);
\draw[e] (p2) -- (p34);
\draw[e] (p2) -- (p47);
\draw[e] (p2) -- (p54);
\draw[e] (p3) -- (p20);
\draw[e] (p3) -- (p25);
\draw[e] (p3) -- (p35);
\draw[e] (p3) -- (p45);
\draw[e] (p4) -- (p8);
\draw[e] (p4) -- (p12);
\draw[e] (p4) -- (p51);
\draw[e] (p4) -- (p58);
\draw[e] (p4) -- (p60);
\draw[e] (p5) -- (p6);
\draw[e] (p5) -- (p27);
\draw[e] (p5) -- (p46);
\draw[e] (p5) -- (p47);
\draw[e] (p5) -- (p62);
\draw[e] (p6) -- (p13);
\draw[e] (p6) -- (p18);
\draw[e] (p6) -- (p29);
\draw[e] (p7) -- (p26);
\draw[e] (p7) -- (p29);
\draw[e] (p7) -- (p33);
\draw[e] (p7) -- (p49);
\draw[e] (p7) -- (p56);
\draw[e] (p8) -- (p23);
\draw[e] (p8) -- (p32);
\draw[e] (p8) -- (p48);
\draw[e] (p8) -- (p57);
\draw[e] (p9) -- (p14);
\draw[e] (p9) -- (p24);
\draw[e] (p9) -- (p31);
\draw[e] (p9) -- (p47);
\draw[e] (p9) -- (p51);
\draw[e] (p10) -- (p20);
\draw[e] (p10) -- (p42);
\draw[e] (p10) -- (p55);
\draw[e] (p10) -- (p62);
\draw[e] (p11) -- (p28);
\draw[e] (p11) -- (p33);
\draw[e] (p11) -- (p43);
\draw[e] (p11) -- (p53);
\draw[e] (p12) -- (p16);
\draw[e] (p12) -- (p20);
\draw[e] (p12) -- (p59);
\draw[e] (p13) -- (p14);
\draw[e] (p13) -- (p35);
\draw[e] (p13) -- (p54);
\draw[e] (p13) -- (p55);
\draw[e] (p14) -- (p21);
\draw[e] (p14) -- (p26);
\draw[e] (p14) -- (p37);
\draw[e] (p15) -- (p37);
\draw[e] (p15) -- (p41);
\draw[e] (p15) -- (p57);
\draw[e] (p16) -- (p31);
\draw[e] (p16) -- (p40);
\draw[e] (p16) -- (p56);
\draw[e] (p17) -- (p22);
\draw[e] (p17) -- (p32);
\draw[e] (p17) -- (p39);
\draw[e] (p17) -- (p55);
\draw[e] (p17) -- (p59);
\draw[e] (p18) -- (p28);
\draw[e] (p18) -- (p50);
\draw[e] (p18) -- (p63);
\draw[e] (p19) -- (p36);
\draw[e] (p19) -- (p41);
\draw[e] (p19) -- (p51);
\draw[e] (p19) -- (p61);
\draw[e] (p20) -- (p24);
\draw[e] (p20) -- (p28);
\draw[e] (p21) -- (p22);
\draw[e] (p21) -- (p43);
\draw[e] (p21) -- (p62);
\draw[e] (p21) -- (p63);
\draw[e] (p22) -- (p29);
\draw[e] (p22) -- (p34);
\draw[e] (p22) -- (p45);
\draw[e] (p23) -- (p42);
\draw[e] (p23) -- (p45);
\draw[e] (p23) -- (p49);
\draw[e] (p24) -- (p39);
\draw[e] (p24) -- (p48);
\draw[e] (p25) -- (p30);
\draw[e] (p25) -- (p40);
\draw[e] (p25) -- (p47);
\draw[e] (p25) -- (p63);
\draw[e] (p26) -- (p36);
\draw[e] (p26) -- (p58);
\draw[e] (p27) -- (p44);
\draw[e] (p27) -- (p49);
\draw[e] (p27) -- (p59);
\draw[e] (p28) -- (p32);
\draw[e] (p28) -- (p36);
\draw[e] (p29) -- (p30);
\draw[e] (p29) -- (p51);
\draw[e] (p30) -- (p37);
\draw[e] (p30) -- (p42);
\draw[e] (p30) -- (p53);
\draw[e] (p31) -- (p50);
\draw[e] (p31) -- (p53);
\draw[e] (p31) -- (p57);
\draw[e] (p32) -- (p47);
\draw[e] (p32) -- (p56);
\draw[e] (p33) -- (p38);
\draw[e] (p33) -- (p48);
\draw[e] (p33) -- (p55);
\draw[e] (p35) -- (p52);
\draw[e] (p35) -- (p57);
\draw[e] (p37) -- (p38);
\draw[e] (p37) -- (p59);
\draw[e] (p38) -- (p45);
\draw[e] (p38) -- (p50);
\draw[e] (p38) -- (p61);
\draw[e] (p39) -- (p58);
\draw[e] (p39) -- (p61);
\draw[e] (p40) -- (p55);
\draw[e] (p41) -- (p46);
\draw[e] (p41) -- (p56);
\draw[e] (p41) -- (p63);
\draw[e] (p42) -- (p52);
\draw[e] (p43) -- (p60);
\draw[e] (p44) -- (p48);
\draw[e] (p44) -- (p52);
\draw[e] (p45) -- (p46);
\draw[e] (p46) -- (p53);
\draw[e] (p46) -- (p58);
\draw[e] (p48) -- (p63);
\draw[e] (p49) -- (p54);
\draw[e] (p50) -- (p60);
\draw[e] (p52) -- (p56);
\draw[e] (p52) -- (p60);
\draw[e] (p53) -- (p54);
\draw[e] (p54) -- (p61);
\draw[e] (p57) -- (p62);
\draw[e] (p61) -- (p62);
\draw[w] (p0) -- (p15);
\draw[w] (p0) -- (p40);
\draw[w] (p15) -- (p34);
\draw[w] (p34) -- (p44);
\draw[w] (p36) -- (p40);
\draw[w] (p36) -- (p44);
\node[v0] at (p0) {};
\node[v0] at (p1) {};
\node[v0] at (p2) {};
\node[v0] at (p3) {};
\node[v0] at (p4) {};
\node[v0] at (p5) {};
\node[v2] at (p6) {};
\node[v0] at (p7) {};
\node[v3] at (p8) {};
\node[v0] at (p9) {};
\node[v0] at (p10) {};
\node[v0] at (p11) {};
\node[v3] at (p12) {};
\node[v0] at (p13) {};
\node[v3] at (p14) {};
\node[v3] at (p15) {};
\node[v2] at (p16) {};
\node[v1] at (p17) {};
\node[v0] at (p18) {};
\node[v0] at (p19) {};
\node[v2] at (p20) {};
\node[v1] at (p21) {};
\node[v0] at (p22) {};
\node[v2] at (p23) {};
\node[v3] at (p24) {};
\node[v2] at (p25) {};
\node[v1] at (p26) {};
\node[v2] at (p27) {};
\node[v1] at (p28) {};
\node[v1] at (p29) {};
\node[v0] at (p30) {};
\node[v3] at (p31) {};
\node[v0] at (p32) {};
\node[v1] at (p33) {};
\node[v2] at (p34) {};
\node[v2] at (p35) {};
\node[v2] at (p36) {};
\node[v1] at (p37) {};
\node[v0] at (p38) {};
\node[v2] at (p39) {};
\node[v3] at (p40) {};
\node[v1] at (p41) {};
\node[v3] at (p42) {};
\node[v2] at (p43) {};
\node[v0] at (p44) {};
\node[v3] at (p45) {};
\node[v2] at (p46) {};
\node[v3] at (p47) {};
\node[v2] at (p48) {};
\node[v3] at (p49) {};
\node[v1] at (p50) {};
\node[v3] at (p51) {};
\node[v1] at (p52) {};
\node[v1] at (p53) {};
\node[v2] at (p54) {};
\node[v2] at (p55) {};
\node[v3] at (p56) {};
\node[v0] at (p57) {};
\node[v3] at (p58) {};
\node[v0] at (p59) {};
\node[v3] at (p60) {};
\node[v1] at (p61) {};
\node[v2] at (p62) {};
\node[v3] at (p63) {};
\node[font=\tiny,text=black!60] at (0.0000,7.4200) {0};
\node[font=\tiny,text=black!60] at (0.7273,7.3843) {1};
\node[font=\tiny,text=black!60] at (1.4476,7.2774) {2};
\node[font=\tiny,text=black!60] at (2.1539,7.1005) {3};
\node[font=\tiny,text=black!60] at (2.8395,6.8552) {4};
\node[font=\tiny,text=black!60] at (3.4978,6.5439) {5};
\node[font=\tiny,text=black!60] at (4.1223,6.1695) {6};
\node[font=\tiny,text=black!60] at (4.7072,5.7357) {7};
\node[font=\tiny,text=black!60] at (5.2467,5.2467) {8};
\node[font=\tiny,text=black!60] at (5.7357,4.7072) {9};
\node[font=\tiny,text=black!60] at (6.1695,4.1223) {10};
\node[font=\tiny,text=black!60] at (6.5439,3.4978) {11};
\node[font=\tiny,text=black!60] at (6.8552,2.8395) {12};
\node[font=\tiny,text=black!60] at (7.1005,2.1539) {13};
\node[font=\tiny,text=black!60] at (7.2774,1.4476) {14};
\node[font=\tiny,text=black!60] at (7.3843,0.7273) {15};
\node[font=\tiny,text=black!60] at (7.4200,0.0000) {16};
\node[font=\tiny,text=black!60] at (7.3843,-0.7273) {17};
\node[font=\tiny,text=black!60] at (7.2774,-1.4476) {18};
\node[font=\tiny,text=black!60] at (7.1005,-2.1539) {19};
\node[font=\tiny,text=black!60] at (6.8552,-2.8395) {20};
\node[font=\tiny,text=black!60] at (6.5439,-3.4978) {21};
\node[font=\tiny,text=black!60] at (6.1695,-4.1223) {22};
\node[font=\tiny,text=black!60] at (5.7357,-4.7072) {23};
\node[font=\tiny,text=black!60] at (5.2467,-5.2467) {24};
\node[font=\tiny,text=black!60] at (4.7072,-5.7357) {25};
\node[font=\tiny,text=black!60] at (4.1223,-6.1695) {26};
\node[font=\tiny,text=black!60] at (3.4978,-6.5439) {27};
\node[font=\tiny,text=black!60] at (2.8395,-6.8552) {28};
\node[font=\tiny,text=black!60] at (2.1539,-7.1005) {29};
\node[font=\tiny,text=black!60] at (1.4476,-7.2774) {30};
\node[font=\tiny,text=black!60] at (0.7273,-7.3843) {31};
\node[font=\tiny,text=black!60] at (0.0000,-7.4200) {32};
\node[font=\tiny,text=black!60] at (-0.7273,-7.3843) {33};
\node[font=\tiny,text=black!60] at (-1.4476,-7.2774) {34};
\node[font=\tiny,text=black!60] at (-2.1539,-7.1005) {35};
\node[font=\tiny,text=black!60] at (-2.8395,-6.8552) {36};
\node[font=\tiny,text=black!60] at (-3.4978,-6.5439) {37};
\node[font=\tiny,text=black!60] at (-4.1223,-6.1695) {38};
\node[font=\tiny,text=black!60] at (-4.7072,-5.7357) {39};
\node[font=\tiny,text=black!60] at (-5.2467,-5.2467) {40};
\node[font=\tiny,text=black!60] at (-5.7357,-4.7072) {41};
\node[font=\tiny,text=black!60] at (-6.1695,-4.1223) {42};
\node[font=\tiny,text=black!60] at (-6.5439,-3.4978) {43};
\node[font=\tiny,text=black!60] at (-6.8552,-2.8395) {44};
\node[font=\tiny,text=black!60] at (-7.1005,-2.1539) {45};
\node[font=\tiny,text=black!60] at (-7.2774,-1.4476) {46};
\node[font=\tiny,text=black!60] at (-7.3843,-0.7273) {47};
\node[font=\tiny,text=black!60] at (-7.4200,-0.0000) {48};
\node[font=\tiny,text=black!60] at (-7.3843,0.7273) {49};
\node[font=\tiny,text=black!60] at (-7.2774,1.4476) {50};
\node[font=\tiny,text=black!60] at (-7.1005,2.1539) {51};
\node[font=\tiny,text=black!60] at (-6.8552,2.8395) {52};
\node[font=\tiny,text=black!60] at (-6.5439,3.4978) {53};
\node[font=\tiny,text=black!60] at (-6.1695,4.1223) {54};
\node[font=\tiny,text=black!60] at (-5.7357,4.7072) {55};
\node[font=\tiny,text=black!60] at (-5.2467,5.2467) {56};
\node[font=\tiny,text=black!60] at (-4.7072,5.7357) {57};
\node[font=\tiny,text=black!60] at (-4.1223,6.1695) {58};
\node[font=\tiny,text=black!60] at (-3.4978,6.5439) {59};
\node[font=\tiny,text=black!60] at (-2.8395,6.8552) {60};
\node[font=\tiny,text=black!60] at (-2.1539,7.1005) {61};
\node[font=\tiny,text=black!60] at (-1.4476,7.2774) {62};
\node[font=\tiny,text=black!60] at (-0.7273,7.3843) {63};
\end{tikzpicture}